\theoremstyle{plain}
\newtheorem{theorem}{\bf Theorem}[section]
\newtheorem{proposition}[theorem]{\bf Proposition}
\theoremstyle{definition}
\newtheorem{example}[theorem]{\bf Example}
\newtheorem{remark}[theorem]{\bf Remark}
\newcommand{\N}{\mathbb N}
\newcommand{\Z}{\mathbb Z}
\newcommand{\Q}{\mathbb Q}
 \DeclareMathOperator{\ord}{ord}
\newcommand{\red}{{\text{\rm red}}}
\newcommand{\C}{\text{\rm C}}
\newcommand{\bdot}{\boldsymbol{\cdot}}
\newcommand{\DP}{\negthinspace : \negthinspace}
\numberwithin{equation}{section}
\begin{document}

\title{A characterization of seminormal C-monoids}

\author{Alfred Geroldinger and Qinghai Zhong}

\address{University of Graz, NAWI Graz \\
Institute for Mathematics and Scientific Computing \\
Heinrichstra{\ss}e 36\\
8010 Graz, Austria}

\email{alfred.geroldinger@uni-graz.at, qinghai.zhong@uni-graz.at}
\urladdr{http://imsc.uni-graz.at/geroldinger, http://qinghai-zhong.weebly.com/}

\keywords{Krull monoids, C-monoids, seminormal, class semigroups, half-factorial}

\subjclass[2010]{20M13, 13A05, 13A15, 13F05, 13F45}

\thanks{This work was supported by the Austrian Science Fund FWF, Project Number P28864-N35}

\begin{abstract}
It is well-known that a C-monoid is completely integrally closed if and only if its reduced class semigroup is a group and if this holds, then the C-monoid is a Krull monoid and the reduced class semigroup coincides with the usual class group of Krull monoids. We prove that a C-monoid is seminormal if and only if its reduced class semigroup  is a union of groups. Based on this characterization we establish a criterion (in terms of the class semigroup) when seminormal C-monoids are half-factorial.
\end{abstract}

\maketitle

\section{Introduction} \label{1}
\smallskip

A C-monoid $H$ is a submonoid of a factorial monoid, say $H \subset F$,  such that $H^{\times} = H \cap F^{\times}$ and the reduced class semigroup is finite. A commutative ring is a C-ring if its multiplicative monoid of regular elements is a C-monoid. Every C-monoid is Mori (i.e., $v$-noetherian), its complete integral closure $\widehat H$ is a Krull monoid with finite class group $\mathcal C (\widehat H)$, and the conductor $(H \DP \widehat H)$ is non-trivial. Conversely, every  Mori domain $R$ with non-zero conductor $\mathfrak f = (R \DP \widehat R)$, for which  the residue class ring $R/\mathfrak f$ and the class group $\mathcal C (\widehat R)$ are finite,  is a C-domain (\cite[Theorem 2.11.9]{Ge-HK06a}), and these two finiteness conditions are equivalent to being a C-domain for   non-local semilocal noetherian domains (\cite[Corollary 4.5]{Re13a}).
The following result is well-known (see Section \ref{2} where we gather the basics on C-monoids).

\medskip
\noindent
{\bf Theorem A.} Let $H$ be a  C-monoid. Then $H$ is completely integrally closed if and only if its reduced class semigroup is a group. If this holds, then $H$ is a Krull monoid and the reduced class semigroup coincides with the class group of $H$.

\smallskip
The goal of the present note is the following characterization of seminormal C-monoids. Note that all seminormal C-domains are seminormal Mori domains (see \cite{Ba94} for a survey) and that, in particular, all seminormal orders in algebraic number fields (see \cite{Do-Fo87}) are seminormal C-domains.

\medskip
\begin{theorem} \label{1.1}
Let $H$ be a \C-monoid. Then $H$ is seminormal if and only if its reduced class semigroup is a union of groups. If this holds, then we have
\begin{enumerate}
\item The class of every element of $H$ is an idempotent of the class semigroup.

\item The constituent group of the smallest idempotent of the class semigroup is isomorphic to the class group of the complete integral closure of $H$.
\end{enumerate}
\end{theorem}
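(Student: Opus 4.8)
The plan is to translate the monoid-theoretic condition into a condition on the finite commutative semigroup $\mathcal S:=\mathcal C^*(H,F)$, where $H\subseteq F=F^{\times}\time\mathcal F(P)$ is the defining factorial monoid, and then to combine the structure theory of finite commutative semigroups with the explicit description of $\mathcal S$ recalled in Section~\ref{2}. First I would reduce to the reduced case $H=H_{\mathrm{red}}$, $F=\mathcal F(P)$, so that the reduced class semigroup is the ordinary class semigroup and membership is tested by the kernel sets $K_a:=\{c\in F:ac\in H\}$; seminormality, the property of being a union of groups, and the isomorphism type of $\mathcal C(\widehat H)$ are all insensitive to this reduction. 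I would then record the two facts I lean on: (a) $F$ is root-closed, and the class homomorphism $\delta\colon q(F)\to\mathcal C(\widehat H)$ has kernel $q(H)=q(\widehat H)$ with $\widehat H=F\cap q(H)$, so that $x\in q(H)$ with $x^2\in H$ already forces $x\in\widehat H$; and (b) a finite commutative semigroup is a union of groups if and only if every element is regular, i.e. $s\in s^2\mathcal S$, equivalently $s=se$ for the idempotent power $e$ of $s$.

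For the two structural claims and the easy implication I would argue as follows. If $\mathcal S$ is a union of groups and $a\in H$, then $K_a\subseteq K_{a^2}\subseteq\cdots$ is increasing (because $a\in H$), while regularity gives $a\sim a^{1+m}$ for the idempotent power $[a]^m$; hence the chain is constant and $[a]=[a^2]$, proving~(1). Using~(1): if $x\in q(H)$ with $x^2,x^3\in H$, then (a) puts $x\in\widehat H$, so $[x]$ lies in a maximal subgroup $G_e$; since $x^2,x^3\in H$, claim~(1) shows $[x]^2=[x^2]$ and $[x]^3=[x^3]$ are idempotents of $G_e$, hence both equal $e$, which forces $[x]=e$ and therefore $1\in K_{x^2}=K_x$, i.e. $x\in H$. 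This yields ``union of groups $\Rightarrow$ seminormal.'' For~(2) I would identify the smallest idempotent $e_0$ with the full support $P$: for $a$ with $\supp(a)=P$ and all exponents large, the conductor property gives $ac\in H\iff ac\in\widehat H\iff\delta(c)=-\delta(a)$, so $[a]$ depends only on $\delta(a)$; then $[a]\mapsto\delta(a)$ is a well-defined isomorphism from the minimal ideal $G_{e_0}=\mathcal S e_0$ onto $\mathcal C(\widehat H)$.

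The heart of the matter is the remaining implication ``seminormal $\Rightarrow$ union of groups,'' which I would deduce from the following core statement: if $H$ is seminormal then for $u,w\in\widehat H$ with $\supp(u)=\supp(w)$ one has $u\in H\iff w\in H$ (support-determination). Granting this, the description of $\mathcal S$ from Section~\ref{2} shows that $[a]$ depends only on $(\delta(a),\supp(a))$, whence for the idempotent $e=e_{\supp(a)}$ one computes $[a]e=[a]$; thus every class is regular and $\mathcal S$ is a union of groups. To prove the core statement, take $u,w\in\widehat H$ with common support $S$ and $u\in H$. By the structure theory there is a fixed bound $N$ for which the class of an element of $\widehat H$ is determined by its exponents truncated at $N$; hence $[w^m]=[u^m]$ for all $m\ge N$, and since $u^m\in H$ we obtain $w^m\in H$ for all $m\ge N$. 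Finally I descend: since each $w^m\in\widehat H$, seminormality applied to $w^m$ (using $(w^m)^2=w^{2m}$ and $(w^m)^3=w^{3m}\in H$) gives $w^m\in H$ for all $m\ge\lceil N/2\rceil$, and iterating this halving finitely often yields $w=w^1\in H$.

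The step I expect to be the main obstacle is exactly this core statement, and within it the honest use of the description of $\mathcal C^*(H,F)$: one must pin down a single threshold $N$ governing all relevant primes simultaneously, control the partial-support strata uniformly (the descent must rely only on the stabilization $[w^m]=[u^m]$ inside a fixed support, not on reaching the global conductor), and track the class-group component $\delta$ so that passing to powers introduces no rotation. The unit bookkeeping behind the reduction to $H_{\mathrm{red}}$, and the verification in~(2) that the deep classes really exhaust the minimal ideal, are the other places where care is needed.
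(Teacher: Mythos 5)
Your arguments for ``union of groups $\Rightarrow$ seminormal'' and for claim (1) are correct and essentially the paper's (the chain $K_a\subseteq K_{a^2}\subseteq\cdots$ stabilized by regularity, and the $x^2,x^3$-criterion combined with $\widehat H=\mathsf q(H)\cap F$). The fatal problem is the hard direction, which you make rest entirely on the ``core statement'': if $H$ is seminormal and $u,w\in\widehat H$ have equal support, then $u\in H\iff w\in H$. This statement is false, even in your reduced setting with trivial units. Take $F=\mathcal F(\{p,q,r\})$ and $H=\{x\in F\mid \mathsf v_r(x)\ge 1\ \text{or}\ \mathsf v_q(x)\ \text{even}\}$. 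Then $H\subset F$ is a dense C-monoid with $\mathcal C^*(H,F)=\{[1],[q],[r]\}$, where $\{[1],[q]\}\cong \Z/2\Z$ and $[r]$ is a zero element; so $\mathcal C^*$ is a union of groups and $H$ is seminormal (this is also immediate to check directly). Since $q=(qr)/r\in\mathsf q(H)$, one has $\widehat H=\mathsf q(H)\cap F=F$, and the elements $u=q^2\in H$ and $w=q\in\widehat H\setminus H$ have the same support. Moreover $\mathcal C(\widehat H)=0$ here, so even your refined claim that $[a]$ depends only on $\bigl(\delta(a),\supp(a)\bigr)$ fails.

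The source of the error is the structure theory you invoke: classes in a C-monoid are \emph{not} determined by exponents truncated at some bound $N$. The correct statement (property (P2) in the paper's proof, $q^{2\alpha}F\cap H=q^{\alpha}(q^{\alpha}F\cap H)$) gives only eventual \emph{periodicity} of membership in the exponents, with period $\alpha$, never eventual constancy; in the example above $[q^m]$ oscillates with the parity of $m$ forever, so your key step ``$[w^m]=[u^m]$ for all $m\ge N$'' never becomes available and the descent collapses. The paper's proof of this direction avoids any statement of support-determination type (none is true): for each $a$ it picks $n$, a multiple of $\alpha$, with $[a^n]$ idempotent and proves $a\sim a^{n+1}$ directly, using seminormality twice (once for each direction of the equivalence) together with properties (P1) and (P2) of a specially chosen ambient monoid $F$. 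Two further points would need repair even in the salvageable parts. First, your opening reduction to $F=\mathcal F(P)$ is not legitimate: one may assume $H$ reduced, but the ambient monoid must be kept of the form $F^{\times}\times\mathcal F(P)$; in the paper's Example~\ref{4.3} the unit group $F^{\times}$ carries essential information, the projection $F\to\mathcal F(P)$ is not even injective on $B$, and it collapses the class semigroup (turning a non-Krull seminormal C-monoid into a Krull monoid). Second, your proof of claim (2) via elements of ``full support with all exponents large'' breaks down whenever $P$ is infinite (already for Krull monoids with finite class group, such as rings of integers), where no such elements exist; the paper instead restricts the canonical epimorphism $\Phi\colon\mathcal C^*\to\mathcal C(\widehat H)$, $[a]\mapsto a\,\mathsf q(\widehat H)$, to the constituent group of the smallest idempotent and proves bijectivity there using claim (1).
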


The finiteness of the class semigroup allows to establish a variety of arithmetical finiteness results for C-monoids (for a sample, out of many, see \cite[Theorem 4.6.6]{Ge-HK06a} or \cite{Fo-Ha06a}). In the special case of Krull monoids with finite class group, precise arithmetical results can be given in terms of the structure of the class group (for a survey see \cite{Sc16a}). As a first step in this direction in the more general  case of seminormal C-monoids, we establish -- based on Theorem \ref{1.1} -- a characterization of half-factoriality in terms of the class  semigroup (Theorem \ref{4.2}).

\medskip
\section{Background on C-monoids} \label{2}
\medskip

We denote by $\N$ the set of positive integers and set $\N_0 = \N \cup \{0\}$. For rationals $a, b \in \Q$, we denote by $[a,b] = \{x \in \Z \mid a \le x \le b\}$ the discrete interval lying between $a$ and $b$.
All semigroups and rings in this paper are commutative and have an identity element and all homomorphisms respect identity elements.
Let $S$ be a multiplicatively written semigroup. Then $S^{\times}$ denotes its group of invertible elements and  $\mathsf E (S)$ its set of  idempotents. For subsets $A, B \subset S$ and an element $c \in S$ we set
\[
AB = \{ab \mid a \in A, b \in B \} \quad \text{and} \quad cB = \{cb \mid b \in B \} \,.
\]

Let $\mathcal C$ be  semigroup. We use additive notation and have class groups and class semigroups in mind. Let $e, f \in \mathsf E (\mathcal C)$. We denote by $\mathcal C_e$  the set of all $x \in \mathcal C$ such that $x+e=x$ and $x+y=e$ for some $y \in \mathcal C$. Then $\mathcal C_e$ is a group with identity element $e$, called the {\it constituent group} of $e$. If $e \ne f$, then $\mathcal C_e \cap \mathcal C_f = \emptyset$. An element $x \in \mathcal C$ is contained in a subgroup of $\mathcal C$ if and only if there is some $e \in \mathsf E (\mathcal C)$ such that $x \in \mathcal C_e$. Thus $\mathcal C$ is a union of groups if and only if
\begin{equation} \label{structure-Clifford}
\mathcal C = \bigcup_{e \in \mathsf E (\mathcal C)} \mathcal C_e \,.
\end{equation}
Semigroups with this property are called {\it Clifford semigroups} (\cite{Gr01, HK09a}).
Clearly, $0 \in \mathsf E (\mathcal C)$ and $\mathcal C_0 = \mathcal C^{\times}$. The Rees order $\le$ on $\mathsf E (S)$ is defined by $e \le f$ if $e+f = e$. If $\mathsf E (S) = \{0=e_0, \ldots, e_n \}$, then $e_0$ is the largest element and $e_0 + \ldots + e_n$ is the smallest idempotent in the Rees order.
If $\mathcal C$ is finite, then for every $a \in \mathcal C$ there is an $n \in \N$ such that $na = a + \ldots + a \in \mathsf E (\mathcal C)$.

By a {\it monoid}, we mean a cancellative semigroup. We use multiplicative notation and have monoids of nonzero elements of domains in mind.
Let $H$ be a monoid, $H_{\red} = \{aH^{\times} \mid a \in H\}$ the associated reduced monoid,  and $\mathsf q (H)$ the quotient group of $H$.  We denote by
\begin{itemize}
\item $H' = \{ x \in \mathsf q (H) \mid \ \text{there is an $m \in \N$ such that } \ x^n \in H \ \text{for all } \ n \ge m \}$ the {\it seminormalization} of $H$ and by

\item $\widehat H = \{ x \in \mathsf q (H) \mid \ \text{there is a $c \in H$ such that } \ cx^n \in H \ \text{for all} \ n \in \N \}$ the {\it complete integral closure} of $H$.
\end{itemize}
Then $H \subset H' \subset \widehat H \subset \mathsf q (H)$ and $H$ is said to be
\begin{itemize}
\item {\it seminormal} if $H = H'$,
\item {\it completely integrally closed} if $H = \widehat H$,
\item $v$-{\it noetherian} (resp. {\it Mori}) if H satiesfies the ACC on divisorial ideals,
\item a {\it Krull monoid} if H is a completely integrally closed  Mori monoid.
\end{itemize}

Let $F$ be a factorial monoid, say $F = F^{\times} \times \mathcal F (P)$, where $\mathcal F (P)$ is the free abelian monoid with basis $P$. Then every $a \in F$ can be written in the form
\[
a = \varepsilon \prod_{p \in P} p^{\mathsf v_p (a)} \,,
\]
where $\varepsilon \in F^{\times}$ and $\mathsf v_p  \colon F \to \N_0$ is the $p$-adic exponent. Let $H \subset F$ be a submonoid. Two elements $y, y' \in F$ are called $H$-equivalent (we write $y \sim y'$) if $y^{-1}H \cap F = {y'}^{-1} H \cap F$ or, in other words,
\[
\text{if for all} \ x \in F, \ \text{we have} \ xy \in H \quad \text{if and only if} \quad xy' \in H \,.
\]
$H$-equivalence defines a congruence relation on $F$ and for $y \in F$, let $[y]_H^F = [y]$ denote the congruence class of $y$. Then
\[
\mathcal C (H,F) = \{[y] \mid y \in F \} \quad \text{and} \quad \mathcal C ^* (H,F) = \{ [y] \mid y \in (F \setminus F^{\times}) \cup \{1\} \}
\]
are commutative semigroups with identity element $[1]$ (introduced in \cite[Section 4]{Ge-HK04a}). $\mathcal C (H,F)$ is the {\it class semigroup} of $H$ in  $F$ and the subsemigroup $\mathcal C^* (H,F) \subset \mathcal C (H,F)$ is the {\it reduced class semigroup} of $H$ in $F$. We have
\begin{equation} \label{reduced}
\mathcal C (H,F) = \mathcal C^* (H,F) \cup \{[y] \mid y \in F^{\times}\} \ \text{and} \ \{[y] \mid y \in F^{\times}\} \cong F^{\times}/H^{\times} \,.
\end{equation}
It is easy to check that either $\{[y] \mid y \in F^{\times} \} \subset \mathcal C^* (H,F)$ or $\mathcal C^* (H,F) \cap \{[y] \mid y \in F^{\times} \} = \{[1]\}$.
Thus
\begin{equation} \label{group}
\text{$\mathcal C (H,F)$ is a union of groups if and only if $\mathcal C^* (H,F)$ is a union of groups}
\end{equation}
 and $\mathcal C (H,F)$ is finite if and only if both $\mathcal C^* (H,F)$ and $F^{\times}/H^{\times}$ are finite. As usual, (reduced) class semigroups and class groups will be written additively.

A monoid homomorphism $\partial \colon H \to \mathcal F (P)$ is called a {\it divisor theory} (for $H$) if the following two conditions hold:
\begin{itemize}
\item If $a, b \in H$ and $\varphi (a) \mid \varphi (b)$ in $\mathcal F (P)$, then $a \mid b$ in $H$.
\item For every $\alpha \in \mathcal F (P)$ there are $a_1, \ldots, a_m \in H$ such that $\alpha = \gcd \big( \varphi (a_1), \ldots, \varphi (a_m) \big)$.
\end{itemize}
A monoid has a divisor theory if and only if it is a Krull monoid (\cite[Theorem 2.4.8]{Ge-HK06a}). Suppose $H$ is a Krull monoid. Then there is an embedding of $H_{\red}$ into a free abelian monoid, say $H_{\red} \hookrightarrow \mathcal F (P)$. In this case $\mathcal F (P)$ is uniquely determined (up to isomorphism) and
\[
\mathcal C (H) = \mathsf q ( \mathcal F (P))/ \mathsf q (H_{\red})
\]
is the (divisor) class group of $H$ which  is isomorphic to the $v$-class group $\mathcal C_v (H)$ of $H$.

A monoid $H$ is a C-{\it monoid} if it is a submonoid of some factorial monoid $F = F^{\times} \times \mathcal F (P)$ such that $H^{\times} = H \cap F^{\times}$ and the reduced class semigroup is finite.
We say that $H$ is {\it dense} in $F$ if $\mathsf v_p (H) \subset \N_0$ is a numerical monoid for all primes $p$ of $F$.
Suppose that $H$ is a C-monoid. Proofs of the following facts can be found in (\cite[Theorems 2.9.11 and 2.9.12]{Ge-HK06a}).  The monoid $H$ is $v$-noetherian, the conductor $(H \DP \widehat H) \ne \emptyset$, and there is a factorial monoid $F$ such that $H \subset F$ is dense. Suppose that $H \subset F$ is  dense.  Then the map
\begin{equation} \label{divisor-theory}
\partial \colon \widehat H \to \mathcal F (P), \quad \text{defined by} \quad  \partial (a) = \prod_{p \in P} p^{\mathsf v_p (a)}
\end{equation}
is a divisor theory. In particular, $\widehat H$ is a Krull monoid,   $F_{\red}$ and hence $\mathcal C^* (H,F)$ are uniquely determined by $H$, and we call $\mathcal C^* (H,F)$ {\it the reduced class semigroup of $H$}.
\begin{equation} \label{Krull}
\text{If $\mathcal C^* (H,F)$ is a group, then $H$ is a Krull monoid}
\end{equation}
and every Krull monoid with finite class group is a C-monoid.

Let $R$ be a (commutative integral) domain. Then $R$ is seminormal (completely integrally closed, Mori) if and only if its multiplicative monoid $R^{\bullet}$ of nonzero elements has the respective property. Moreover, $R$
\begin{itemize}
\item is a {\it Krull domain} if $R^{\bullet}$ is a Krull monoid, and
\item is a {\it  \C-domain} if $R^{\bullet}$ is a C-monoid.
\end{itemize}
Both statements generalize to rings with zero-divisors (\cite[Theorem 3.5 and Section 4]{Ge-Ra-Re15c}).  We refer to \cite{HK-Ha-Ka04, Ba-Ch14a, Cz-Do-Ge16a, Oh18a} for C-monoids, that do not stem from ring theory, and to \cite{Ka16b} for a more general concept.

\medskip
\section{A characterization of seminormality} \label{3}
\medskip

In this section we first prove the characterization of seminormality, as formulated in Theorem \ref{1.1}. After that we discuss a special class of C-monoids, namely the monoid of product-one sequences over a finite group.

\begin{proof}[Proof of Theorem \ref{1.1}]
By \cite[Theorem 2.9.11.4]{Ge-HK06a}, we may choose a special factorial monoid $F$ such that $H \subset F$ is a dense C-monoid. This special choice will turn out to be useful in the proof of part 3. Since $\widehat H$ is a Krull monoid, we have $\widehat H = \widehat H^{\times} \times H_0$, where $H_0$ is a reduced Krull monoid. If the embedding $H_0 \hookrightarrow \mathcal F (P)$ is a divisor theory, then $H \subset F = \widehat H^{\times} \times \mathcal F (P)$ is a dense C-monoid. In particular, we have
$H^{\times} = H \cap F^{\times}$ and $\mathcal C^* = \mathcal C^* (H,F)$ is finite. Furthermore,  there are $\alpha \in \N$ and a subgroup $V\subset F^{\times}$ such that the following properties hold (\cite[2.9.5 and 2.9.6]{Ge-HK06a}:

\begin{itemize}
\item[P1.] $H^\times \subset V\,, \quad (F^\times \DP V) \mid \alpha\,, \quad V(H \setminus H^\times) \subset H$,

\item[P2.]  $q^{2\alpha}F \cap H = q^\alpha (q^\alpha F \cap H) \quad \text{for all} \quad q \in F \setminus F^\times$ \,.
\end{itemize}
In particular, if $p \in P$ and $a \in p^\alpha F$, then $a \in H$ if and only if $p^\alpha a \in H$.

\smallskip
1. First we suppose that $H$ is seminormal. We have to  show that  $\mathcal C^*$ is a union of groups.
We choose an element $a \in F\setminus F^{\times}$, say $a = \epsilon p_1^{k_1}\cdot\ldots\cdot p_t^{k_t}$, where $\epsilon\in F^{\times}$, $p_1,\ldots,p_t\in P$, $t, k_1,\ldots,k_t \in \N$.
Since  $\mathcal C^*$ is finite, there is an $n \in \N$ such that $[a^n]$ is an idempotent element of $\mathcal C^*$. Since for every multiple $n'$ of $n$, $[a^{n'}]$ is  an idempotent element of $\mathcal C^*$, we may assume without restriction that $n$ is a multiple of $\alpha$.
We assert that $a^{n+1} \sim a$. Clearly, this implies that $[a]$ lies in a cyclic subgroup of $\mathcal C^*$.

In order to show that $a^{n+1} \sim a$, we choose an element $b \in F$, . First, suppose that $ab \in H$. Then for every $r\ge \alpha$, we have $(ab)^r\in H\cap p_1^{\alpha}\cdot\ldots\cdot p_t^{\alpha} F$. It follows by Properties (P1) and (P2) that
\[
(a^{n+1}b)^r=a^{rn}(ab)^r=\epsilon^{rn}p_1^{rk_1\alpha}\cdot\ldots\cdot p_t^{rk_t\alpha}(ab)^r\in V(H\setminus H^{\times})\subset H\,.
\]
Since $H$ is seminormal, we obtain $a^{n+1}b\in H$.

Conversely, suppose that $ba^{n+1} \in H$. We have to verify that $ba \in H$. To do so we claim that
\begin{equation} \label{technical}
(ba)^m a^n \in H \quad \text{for all } \ m \in \N \,,
\end{equation}
and we proceed by induction on $m$. If $m=1$, then this holds by assumption. Suppose the claim holds for $m \in \N$. Then, by the induction hypothesis,
\[
(ba)^{m+1} a^n \sim (ba)^{m+1}a^{2n}= \Big( (ba)^m a^n \Big) \Big( b a^{n+1} \Big) \in H
\]
whence $(ba)^{m+1}a^n \in H$. Using \eqref{technical} with $m=n$ we infer that
\[
(ba)^n =b^na^n\sim b^na^{2n}=(ba)^n a^n \in H
\]
whence $(ba)^n \in H$. Thus we obtain that
\[
(ba)^{n+1} =(b^{n+1}a)a^n\sim (b^{n+1}a)a^{2n}= (ba)^n \Big(b a^{n+1} \Big) \in H
\]
whence $(ba)^{n+1} \in H$. This implies that $(ba)^{\ell} \in H$ for all sufficiently large $\ell \in \N$. Since $H$ is seminormal, it follows that $ba \in H$.

\smallskip
2. Now we suppose that   $\mathcal C^*$ is a union of groups. Then, by \eqref{reduced}, the class semigroup  $\mathcal C (H,F)$ is a union of groups.
In order to show that $H$ is seminormal, let $a \in \mathsf q (H)$ be given such that $a^n\in H$ for all $n\ge N$ for some $N \in \N$. Since $F$ is factorial, it is seminormal whence $H \subset F$ and $\mathsf q (H) \subset \mathsf q (F)$ imply that $a \in F$.
If $a\in F^{\times}$, then $a=a^{N+1}(a^{N})^{-1}\in H$.  Suppose  $a\in F\setminus F^{\times}$. Since $[a]\in \mathcal C^*$ and $\mathcal C^*$ is finite, we obtain that $\langle[a]\rangle\subset \mathcal C^*$ is a finite cyclic group. If $N_0 \in \N$ is the order of $[a]$ in the cyclic group $\langle[a]\rangle \subset \mathcal C^*$,
then $a\sim a^{N_0t+1}$ for all $t\in \N$. Since $N_0N+1>N$, we obtain that $a^{N_0N+1}\in H$ and hence $a\in H$.

\smallskip
3. Suppose that $H$ is seminormal and set $\mathsf E ( \mathcal C^*) = \{e_0, \ldots, e_n\}$, where $n \in \N_0$,  $e_0= [1]$, and $e_0 + \ldots + e_n = e_n$. For $i \in [0,n]$, we set $\mathcal C^*_i = \mathcal C^*_{e_i}$. Since $\mathcal C^*$ is a union of groups,  \eqref{structure-Clifford} implies that
\[
\mathcal C^* = \biguplus_{i=0}^n \mathcal C^*_{i} \,.
\]
For every $i \in [0,n]$, we have $\mathcal C^*_i = \{g \in \mathcal C^* \mid mg = e_i \ \text{for some } \ m \in \N\}$ and $\mathcal C_i^* + C_n^* = C_n^*$.

For the class group of the Krull monoid $\widehat H$, we have
\[
\mathcal C (\widehat H) = \mathsf q (\mathcal F (P))/\mathsf q (H_0) = \mathsf q (F)/\mathsf q (\widehat H) = \mathsf q (F)/\mathsf q (H) \,.
\]
Since $H_0 \hookrightarrow \mathcal F (P)$ is a divisor theory, the inclusion $\widehat H = \widehat H^{\times} \times H_0 \subset F = \widehat H^{\times} \times \mathcal F (P)$ is saturated whence $\widehat H = \mathsf q (\widehat H) \cap F = \mathsf q (H) \cap F$ (\cite[Corollary 2.4.3.2]{Ge-HK06a}).

3.(a) Let $a\in H$. If $a\in H^{\times}$, then $[a]=[1]$ is an idempotent of $\mathcal C^*$. Suppose $a\in H\setminus H^{\times}$.
We have to show that $a\sim a^2$ and to do so we choose some
$b\in F$. If $ab\in H$, then obviously $a^2b\in H$. Conversely, suppose that $a^2b\in H$. Since $\mathcal C^*$ is a finite Clifford semigroup,   there exists an $m \in \N$ such that $a^{m+1}\sim a$.  Then  $ab\sim a^{m+1}b=a^{m-1}a^2b\in H$ which implies  $ab\in H$.

3.(b)  By \cite[Proposition 2.8.7.1]{Ge-HK06a},
the map $\Phi \colon \mathcal C^*\rightarrow \mathcal C (\widehat{H})$ given  by $\Phi([a])=a\mathsf q(\widehat{H})$ is an epimorphism.
Therefore $\Phi_n = \Phi |_{\mathcal C^*_n} \colon  \mathcal C^*_n\rightarrow\mathcal C(\widehat{H})$ is a group homomorphism, and it remains to show that it is bijective.

Let $b\in F\setminus F^{\times}$ such that $[b]=e_n$. Since $e_n$ is the identity of $\mathcal C_n^*$,  $\Phi_n (e_n)$ is the identity element of $\mathcal C(\widehat{H})$ whence $b \mathsf q (\widehat H) = \Phi_n (e_n) = \mathsf q (H)$ and  $b \in \mathsf q (H) \cap F = \widehat{H}$.
Then for every $a\in F$ we have
$[ba]=[b]+[a]\in \mathcal C^*_n$ and $\Phi([ba])=ba\mathsf q(\widehat{H})=a\mathsf q(\widehat{H})$. Thus $\Phi_{n}$ is surjective.

 Let $a\in F\setminus F^{\times}$  with $[a]\in \mathcal C^*_n$ such that $\Phi_n ([a])$ is the identity element of $\mathcal C(\widehat{H})$. Then $a \mathsf q (H) = \mathsf q (H)$ and $a \in \mathsf q (H) \cap F = \widehat{H}$ whence there exists $c\in H$ such that $ca\in H$.  Thus 3.(a) implies that  $[ca]$ and $[c]$ are  idempotents.
Since $[a]$ and $e_n$ are in $\mathcal C^*_n$ and $C_i^* + C_n^* = C_n^*$ for all $i \in [0,n]$, it follows that $[ca] = [c]+[a]$ and  $[c]+e_n$ are in $\mathcal C^*_n$. Since $[ca]=[ca]+[ca]$, it follows that $[ca]=e_n$ and similarly  $[c]+e_n=e_n$. Thus we obtain that
\[
[a]=[a]+e_n=[a]+[c]+e_n=[ca]+e_n=e_n \,,
\]
which implies that $\Phi_{n}$ is injective.
\end{proof}

\medskip
In our next remark and also in Section \ref{4}, we need the concept of monoids of product-one sequences over groups. Let $G$ be a finite group and $\mathcal F (G)$ be the free abelian monoid with basis $G$. An element $S = g_1  \bdot \ldots \bdot g_{\ell} \in \mathcal F (G)$ is said to be a {\it product-one sequence} (over $G$) if its terms can be ordered such that their product equals $1_G$, the identity element of the group $G$. The monoid $\mathcal B (G)$ of all product-one sequences over $G$ is a finitely generated C-monoid, and it is a Krull monoid if and only if $G$ is abelian (\cite[Theorem 3.2]{Cz-Do-Ge16a}).

\medskip
\begin{remark} \label{3.1}~

1. The main statement of Theorem \ref{1.1} was proved first for  the monoid of product-one sequences over a finite group. Let $G$ be a finite group and $G'$ be its commutator subgroup.  Jun Seok Oh showed that $\mathcal B (G)$ is seminormal if and only if $|G'|\le 2$  if and only if its class semigroup $\mathcal C^* ( \mathcal B (G), \mathcal F (G))$ is a union of groups (\cite[Corollary 3.12]{Oh19a}).

2. Let $H \subset F$ be a C-monoid with all conventions as in Theorem \ref{1.1}. If $a \in H$, then Theorem \ref{1.1}.1 implies that the element $[a] \in \mathcal C (H,F)$ is idempotent. In case $H = \mathcal B (G)$, the converse holds and this fact was used in the characterization when $\mathcal B (G)$ is seminormal. However, the converse does not hold true for general C-monoids as the next example shows.

3. Let $F= \mathcal F (P)$ with $P = \{p_1,p_2\}$ and
\[
H = [p_1p_2, p_1^{2k}p_2 \mid  k\ge 0]  =\{1, p_1p_2\}\cup \{p_1^{2k}p_2\mid k\ge 0\}\cup \{p_1^tp_2^s \mid  t\ge 0,  s\ge 2\} \subset F \,.
\]
Then $(H \DP p_1^2)=(H \DP p_1^4)=H\setminus\{1, p_1p_2\}$ which implies that $[p_1^2]=[p_1^4] \in \mathcal C (H,F)$.
Thus $[p_1^2]$ is an idempotent but $p_1^2 \not\in H$.
\end{remark}

\medskip
\section{A characterization of half-factoriality} \label{4}
\medskip

Let $H$ be a monoid and $\mathcal A (H)$ the set of atoms (irreducible elements) of $H$.  If an element $a \in H$  has a factorization of the form $a=u_1 \cdot \ldots \cdot u_k$, where $k \in \N$ and $u_1, \ldots, u_k \in \mathcal A (H)$, then $k$ is called a factorization length of $a$. The set $\mathsf L_H (a) = \mathsf L (a)$ of all factorization lengths is called the {\it set of lengths} of $a$, and for simplicity we set $\mathsf L (a) = \{0\}$ for $a \in H^{\times}$.  If $H$ is $v$-noetherian (which holds true of all C-monoids), then every $a \in H$ has a factorization into atoms and all sets of lengths are finite (\cite[Theorem 2.2.9]{Ge-HK06a}). For a finite set $L = \{m_0, \ldots, m_k\} \subset \Z$, where $k \in \N_0$ and $m_0 < \ldots < m_k$, $\Delta (L) = \{m_i - m_{i-1} \mid i \in [1,k]\}$ is called the set of distances of $L$. Then
\[
\mathcal L (H) = \{\mathsf L (a) \mid a \in H \} \quad \text{resp.} \quad \Delta (H) = \bigcup_{L \in \mathcal L (H)} \Delta (L)
\]
is called the {\it system of sets of lengths} resp. the {\it set of distances} of $H$.
The monoid $H$ is {\it half-factorial} if $\Delta (H)= \emptyset$ (equivalently, $|L|=1$ for all  $L \in \mathcal L (H)$).

Half-factoriality has always been a central topic in factorization theory (e.g., \cite{Ch-Co00, Co05a,  Co-Ma-Ok17a,  Ma-Ok16a,Ph12b, Sc05c}). In 1960 Carlitz proved that a ring of integers in an algebraic number field is half-factorial if and only if the class group has at most two elements. This result (which has a simple proof nowadays) carries over to Krull monoids. Indeed, a Krull monoid with class group $G$, that has a prime divisor in each class, is half-factorial if and only if $|G| \le 2$ (the assumption on the distribution of prime divisors is crucial; we refer to \cite[Section 5]{Gi06a} for background if this assumption fails). We will need this result and the involved machinery in our study of half-factoriality for C-monoids. Thus, we recall that  a monoid homomorphism $\theta \colon H \to B$ is  a {\it transfer homomorphism} if the following  conditions hold{\rm \,:}
\begin{enumerate}
\item[{\bf (T\,1)\,}] $B = \theta(H) B^\times$ \ and \ $\theta ^{-1} (B^\times) = H^\times$.

\item[{\bf (T\,2)\,}] If $u \in H$, \ $b,\,c \in B$ \ and \ $\theta (u) = bc$, then there exist \ $v,\,w \in H$ \ such that \ $u = vw$, \ $\theta (v)B^{\times} = b B^{\times}$ \ and \ $\theta (w)B^{\times} = cB^{\times}$.
\end{enumerate}
If $\theta \colon H \to B$ is a transfer homomorphism, then $\mathcal L (H) = \mathcal L (B)$ (\cite[Proposition 3.2.3]{Ge-HK06a}) whence $H$ is half-factorial if and only if $B$ is half-factorial.

All generalizations of Carlitz's result beyond the setting of Krull monoids have turned out to be surprisingly difficult. We mention two results valid for special classes of C-domains.  There is a characterization of half-factoriality for orders in quadratic number fields in number theoretic terms (\cite[Theorem 3.7.15]{Ge-HK06a}) and for a class of seminormal weakly Krull domains (including seminormal orders in number fields)  in algebraic terms involving the $v$-class group and extension properties of prime divisorial ideals (see \cite[Theorem 6.2]{Ge-Ka-Re15a} and \cite{Ge-Zh16c}).

In this section we establish a characterization of half-factoriality in terms of the class semigroup, that is valid for all seminormal C-monoids and based on Theorem \ref{1.1}. Although it is not difficult to show that the set of distances is finite for all C-monoids (\cite[Theorem 3.3.4]{Ge-HK06a}), a characterization of half-factoriality  turns out to be quite involved compared with the simpleness of the result for Krull monoids.  But this difference in complexity stems from the fact that the structure of the class semigroup of a C-monoid $H$ can be much more intricate than the structure of the class group $\mathcal C (\widehat H)$ of its complete integral closure. We provide an explicit example of a half-factorial seminormal C-monoid (Example \ref{4.3}) and we also refer to the explicit examples of class semigroups given in \cite[Section 4]{Oh18a}.

We  introduce our notation which remains valid throughout the rest of this section. Let $H\subset F=F^{\times}\times \mathcal F(P)$ be a dense seminormal C-monoid with $\mathsf E(\mathcal C)=\mathsf E (\mathcal C^*) = \{e_0 = [1], \ldots, e_n\}$, where $n \in \N$ and $e_0+ \ldots + e_n = e_n$. For a subset $T \subset F$, we define
\[
\mathcal C_T (H,F) = \{ [y] \mid y \in T\} \subset \mathcal C (H,F) \,.
\]
We use the abbreviations
\[
\begin{aligned}
\mathcal C = \mathcal C (H,F), \ \mathcal C^* = \mathcal C^* (H,F), \ \mathcal C_H = \mathcal C_H (H,F),  \
 \mathcal C_i^*=\mathcal C^*_{e_i}, \quad \text{and} \quad \mathcal C_i = \mathcal C_{e_i} \quad \text{for all} \  i \in [0,n]  \,,
\end{aligned}
\]
whence
\[
\mathcal C  = \biguplus_{i=0}^n \mathcal C_i, \   \mathcal C^*=\biguplus_{i=0}^n \mathcal C_i^*,\  \
\mathcal C_i=\mathcal C_i^* \ \text{for all} \  i \in [1,n],\  \text{ and }\ \mathcal C^{\times}=\mathcal C_0=\mathcal C_{F^{\times}}(H,F)\cup \mathcal C_0^*,
\]

\medskip
\begin{proposition} \label{4.1}
Let  $H\subset F=F^{\times}\times \mathcal F(P)$ be a dense seminormal   \C-monoid $($with all notations  as above$)$ and  suppose  that every class of  $\mathcal C^*$ contains an element from $P$.
For every $i\in [0,n]$, let $P_i=\{p\in P\mid [p]\in \mathcal C_i^*\}$, $F_i=F^{\times}\times \mathcal F(P_i)$,  $H_i=F_i\cap H$, and let $\varphi_i \colon  \mathcal C_{F^{\times}}(H,F)\rightarrow \mathcal C_i^*$ be defined by $\varphi_i([\epsilon])=[\epsilon]+e_i$ for all $\epsilon\in F^{\times}$.
Then for every  $k\in [0,n]$, $H_k \subset F_k$ is a seminormal \C-monoid and the following statements hold{\rm \,:}

\begin{enumerate}
\item If $e_k\not\in \mathcal C_H$, then $H_k=H^{\times}$.  If $e_k\in \mathcal C_H$, then
      \[
      \mathcal C(H_k, F_k) \cong
      \begin{cases}
      \mathcal C_k,  & \text{if $\varphi_k$ is injective}, \\
      \mathcal C_{F^{\times}}(H,F) \cup \mathcal C_k,  & \text{if  $\varphi_k$ is not injective.}
      \end{cases}
      \]
      In particular, $\mathcal C(H_0,F_0)\cong \mathcal C_0$ and if $\mathcal C (H_k, F_k)$ is a group, then $H_k$ is a Krull monoid.

 \item  If $e_k\in \mathcal C_H$, then there is a transfer homomorphism $\theta_k \colon H_k\rightarrow \mathcal B \big(\mathcal C_k^*/\varphi_k( \mathcal C_{F^{\times}}(H,F)) \big)$.
\end{enumerate}

\end{proposition}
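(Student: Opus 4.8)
The plan is to first establish the facts about $H_k$ that drive both parts, then read off (1), and finally build the transfer homomorphism in (2); throughout I write $[\,\cdot\,]$ for classes in $\mathcal C$ and $[\,\cdot\,]^{F_k}_{H_k}$ for classes in $\mathcal C(H_k,F_k)$. First I would dispose of the easy structural claims: $F_k=F^{\times}\times\mathcal F(P_k)$ is factorial, hence seminormal, and $H$ is seminormal, so any $x\in\mathsf q(H_k)\subset\mathsf q(H)\cap\mathsf q(F_k)$ with $x^m\in H_k$ for all large $m$ lies in $H\cap F_k=H_k$; thus $H_k$ is seminormal, and $H_k^{\times}=H_k\cap F_k^{\times}=H\cap F^{\times}=H^{\times}$. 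The load-bearing observation is that for $y=\epsilon\prod_{p\in P_k}p^{\mathsf v_p(y)}\in F_k\setminus F^{\times}$ the sum $\sum_{p\in P_k}\mathsf v_p(y)[p]$ is a nonempty sum of elements of the group $\mathcal C^*_k$, hence lies in $\mathcal C^*_k$, and since $e_0=[1]$ is the identity of $\mathcal C$ we get $[y]\in\mathcal C_{e_0+e_k}=\mathcal C_k$. If moreover $y\in H_k$, then $[y]$ is idempotent by Theorem \ref{1.1}.1, and the only idempotent of the group $\mathcal C_k$ is $e_k$, whence $[y]=e_k$. In particular $H_k\setminus H^{\times}\neq\emptyset$ forces $e_k\in\mathcal C_H$; conversely, if $b\in F_k$ has $[b]=e_k\in\mathcal C_H$ then, choosing $c\in H$ with $[c]=e_k$ and putting $x=1$ in the definition of $H$-equivalence, $b\sim_H c$ gives $b\in H$, so $b\in H\cap F_k=H_k$. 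This already yields the first assertion of (1): if $e_k\notin\mathcal C_H$, no non-unit can lie in $H_k$, so $H_k=H^{\times}$.

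For the class-semigroup description I would compare $H_k$-equivalence in $F_k$ with $H$-equivalence in $F$. Since any $x\in F_k$ satisfies $xy\in H_k\iff xy\in H$, the implication $y\sim_H y'\Rightarrow y\sim_{H_k}y'$ is immediate, and this gives a surjective homomorphism from $\{[y]\mid y\in F_k\}=\mathcal C_{F^{\times}}\cup\mathcal C^*_k$ (surjectivity onto $\mathcal C^*_k$ uses that every class contains a prime from $P$) onto $\mathcal C(H_k,F_k)$. The substantive point is to show this map is injective on the non-unit part, i.e.\ that $[y]^{F_k}_{H_k}$ for $y\in F_k\setminus F^{\times}$ is already determined by $[y]\in\mathcal C^*_k$; here I would use the special factorial ambient $F$ and the exponent-doubling properties (P1), (P2) to promote an $H_k$-equivalence to a full $H$-equivalence. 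This identifies $\mathcal C^*(H_k,F_k)$ with $\mathcal C_k$. It then remains to apply the dichotomy recalled before \eqref{group} to $H_k\subset F_k$: either $\mathcal C_{F_k^{\times}}(H_k,F_k)\subset\mathcal C^*(H_k,F_k)$, so $\mathcal C(H_k,F_k)\cong\mathcal C_k$, or the unit classes meet $\mathcal C^*(H_k,F_k)$ only in $[1]$ and contribute a separate summand $\cong\mathcal C_{F^{\times}}$. The first alternative holds exactly when the unit classes are absorbed into the $e_k$-component, which is precisely the injectivity of $\varphi_k$, and this produces the stated dichotomy. In both cases $\mathcal C^*(H_k,F_k)$ is finite, so $H_k$ is a \C-monoid, and when $\mathcal C(H_k,F_k)$ is a group \eqref{Krull} shows $H_k$ is a Krull monoid; the case $k=0$, where adding $e_0=[1]$ is trivial, is recorded directly as $\mathcal C(H_0,F_0)\cong\mathcal C_0$.

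For (2), set $G=\mathcal C^*_k/\varphi_k(\mathcal C_{F^{\times}}(H,F))$, a finite abelian group, so $\mathcal B(G)$ is a reduced Krull monoid, and for $p\in P_k$ let $\overline{[p]}$ denote the image of $[p]$ in $G$. I would define $\theta_k\colon H_k\to\mathcal F(G)$ by $\theta_k(a)=\prod_{p\in P_k}\overline{[p]}^{\,\mathsf v_p(a)}$, which is evidently a monoid homomorphism. It lands in $\mathcal B(G)$: for $a\in H_k\setminus H^{\times}$ the identity $[a]=e_k$ gives $e_k=[a]+e_k=\varphi_k([\epsilon])+\sum_{p}\mathsf v_p(a)[p]$ in $\mathcal C^*_k$, and projecting to $G$ annihilates $\varphi_k([\epsilon])$ and $e_k$, so $\sum_p\mathsf v_p(a)\overline{[p]}=0$, i.e.\ $\theta_k(a)$ is a product-one sequence. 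Since $\mathcal B(G)$ is reduced, (T\,1) reduces to surjectivity together with $\theta_k^{-1}(1)=H_k^{\times}$; the latter holds because $\theta_k(a)=1$ forces $a\in F^{\times}\cap H_k=H^{\times}$. For surjectivity, given a product-one sequence $g_1\bdot\ldots\bdot g_{\ell}$ over $G$, I lift each $g_i$ to a prime $p_i\in P_k$ and set $a=p_1\cdot\ldots\cdot p_{\ell}$; then $\sum_i[p_i]$ maps to $0$ in $G$, so $\sum_i[p_i]=\varphi_k([\eta])$ for some $\eta\in F^{\times}$, whence $[\eta^{-1}a]=e_k\in\mathcal C_H$, $\eta^{-1}a\in H\cap F_k=H_k$, and $\theta_k(\eta^{-1}a)=g_1\bdot\ldots\bdot g_{\ell}$.

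For (T\,2), take $u\in H_k$ with $\theta_k(u)=bc$; a splitting of the sequence corresponds to writing $u=\epsilon u_1u_2$ in $F$ with $\theta_k(u_1)=b$ and $\theta_k(u_2)=c$. As in the surjectivity step I would correct $u_1$ by a suitable unit to obtain $v\in H_k$ with $[v]=e_k$ and $\theta_k(v)=b$, and set $w=u/v\in F_k$; then $[u]=[v]+[w]$ with $[u]=[v]=e_k$, and since $[w]\in\mathcal C^*_k$ has identity $e_k$ the relation $e_k=e_k+[w]=[w]$ forces $[w]=e_k$, so $w\in H_k$ and $\theta_k(w)=c$ (the case of empty $c$ being trivial). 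This verifies (T\,2) and completes the construction. The genuinely technical step is the injectivity claim inside (1): converting an $H_k$-equivalence of elements of $F_k$ into a full $H$-equivalence in $F$, which is exactly where the special factorial ambient monoid and the properties (P1), (P2) must be brought to bear. By contrast, once the identity $[a]=e_k$ for $a\in H_k\setminus H^{\times}$ is available, the entire transfer argument in (2) rests on the single implication $[\,\cdot\,]=e_k\in\mathcal C_H\Rightarrow\,\cdot\,\in H$.
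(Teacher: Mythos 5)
Your framework coincides with the paper's: the natural epimorphism $\psi_k \colon \mathcal C_{F_k}(H,F)=\mathcal C_{F^{\times}}(H,F)\cup\mathcal C_k^* \rightarrow \mathcal C(H_k,F_k)$, the two key lemmas (every non-unit of $H_k$ has class $e_k$, and every $b\in F_k$ with $[b]_H^F=e_k\in\mathcal C_H$ lies in $H_k$), and your part (2) — image in $\mathcal B(\cdot)$, surjectivity by lifting classes to primes and correcting by a unit, and (T2) by splitting and the same unit-correction — is essentially the paper's proof and is complete. The problem is part (1), and it sits exactly where you yourself flag ``the genuinely technical step'': you never prove injectivity of $\psi_k$, and the mechanism you propose for it is the wrong one. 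The paper uses neither the special ambient monoid nor the properties (P1), (P2) anywhere in this proposition; those belong to the proof of Theorem \ref{1.1}. What makes injectivity work is the standing hypothesis that every class of $\mathcal C^*$ contains a prime: given $p_1,p_2\in P_k$ with $[p_1]_H^F\neq[p_2]_H^F$, choose $p\in P$ with $[p]_H^F=-[p_1]_H^F$ (automatically $p\in P_k$); then by your own two lemmas $[pp_1]_H^F=e_k\in\mathcal C_H$ gives $pp_1\in H_k$, while $[pp_2]_H^F\neq e_k$ gives $pp_2\notin H_k$, so $[p_1]_{H_k}^{F_k}\neq[p_2]_{H_k}^{F_k}$. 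You had every ingredient assembled but pointed at (P1), (P2) instead; that route is at best unclear, since an $H_k$-equivalence only constrains multipliers lying in $F_k$, and what one needs is precisely a distinguishing multiplier inside $F_k$ — which the prime-in-every-class hypothesis furnishes and (P1), (P2) do not.

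The second gap is the dichotomy. You assert that the unit classes are absorbed into the $e_k$-component ``precisely when $\varphi_k$ is injective,'' but both directions of this equivalence are the actual content of the paper's case analysis and neither is automatic. For the injective case one must show $[\epsilon]_{H_k}^{F_k}=[\epsilon p]_{H_k}^{F_k}$ for $[p]_H^F=e_k$, and the nontrivial implication ($\epsilon pa\in H_k \Rightarrow \epsilon a\in H_k$) requires splitting on whether $[\epsilon a]_H^F$ lies in $\mathcal C_k$ or in $\mathcal C_{F^{\times}}(H,F)$, with injectivity of $\varphi_k$ invoked in the second case. For the non-injective case one must show $\psi_k$ is injective on all of $\mathcal C_{F^{\times}}(H,F)\cup\mathcal C_k$ — including that distinct unit classes stay distinct, and the mixed case, which is where non-injectivity enters: one takes $\eta\in F^{\times}$ with $[\eta]_H^F\neq[1]_H^F$ but $[\eta]_H^F+e_k=e_k$ and derives a contradiction because $\eta\epsilon a_2$ would then lie in $H$ while its class $[\eta]_H^F$ is a non-identity element of the group $\mathcal C_{F^{\times}}(H,F)$, hence not idempotent, contradicting Theorem \ref{1.1}.1. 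None of this is in your write-up, so part (1) as written is a plan rather than a proof; the fix is to drop the (P1)/(P2) idea and run the witness arguments from the prime-in-every-class hypothesis, as above.
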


\begin{proof}
Let $k\in [0,n]$. By \cite[Proposition 2.9.9]{Ge-HK06a}, $H_k \subset F_k$ is a C-monoid and it is a divisor-closed submonoid of $H$. Since $H_k$ is a divisor-closed submonoid of the seminormal monoid $H$, it  is seminormal (this is easy to check; for details see \cite[Lemma 3.2]{Ge-Ka-Re15a}).
By \cite[Lemma 2.8.4.5]{Ge-HK06a}, $\psi_k \colon \mathcal C_{F_k}(H,F)\rightarrow \mathcal C(H_k,F_k)$, defined by $\psi_k([a]_H^F)=[a]_{H_k}^{F_k}$ for  $a\in F_k$, is an epimorphism. By definition, we have $\mathcal C_{F_k}(H,F)= \mathcal C_{F^{\times}}(H,F)\cup \mathcal C_k^*=\mathcal C_{F^{\times}}(H,F)\cup \mathcal C_k$.

1. First suppose $e_k\not\in \mathcal C_H$. It is clear that $H^{\times}\subset H\cap F_k=H_k$. Assume to the contrary that there exists an element $a\in H_k\setminus H^{\times}$. If $a\in F^{\times}$, then Theorem \ref{1.1}.1 implies that $[a]_H^F=[1]_H^F$ which implies $a\in H^{\times}$, a contradiction. If $a\in F_k\setminus F^{\times}$, then $[a]_H^F\in \mathcal C_k$ and again Theorem \ref{1.1}.1 implies that $[a]_H^F=e_k\in\mathcal C_H$, a contradiction.

Now we suppose that $e_k \in \mathcal C_H$.

1.(a) Suppose that $\varphi_k$ is injective. We want to show that $\psi_{k}|_{\mathcal C_k} \colon \mathcal C_k\rightarrow \mathcal C(H_k,F_k)$ is  bijective.

 First, we show that $\psi_{k}|_{\mathcal C_k} $ is surjective.  Let $p\in P_k$ such that $[p]_H^F=e_k$ and $\epsilon\in F^{\times}$. Then $p\in H_k$. It suffices to prove $[\epsilon]_{H_k}^{F_k}=[\epsilon p]_{H_k}^{F_k}$.

Let $a\in F_k$. If $\epsilon a\in H_k$, then $\epsilon pa\in H_k$. Suppose $\epsilon pa\in H_k$. Then $[\epsilon pa]_H^F=[\epsilon a]_H^F+e_k=e_k$. If $[\epsilon a]_H^F\in \mathcal C_k$, then $[\epsilon a]_H^F=e_k$ and hence $\epsilon a\in H\cap F_k= H_k$. Suppose
$[\epsilon a]_H^F\in  \mathcal C_{F^{\times}}(H,F)$.  Since $\varphi_k$ is injective, we obtain  $[\epsilon a]_H^F=[1]_H^F$ whence $\epsilon a\in H_k$.

In order to  show that  $\psi_{k}|_{\mathcal C_k}$ is injective, let
$p_1, p_2\in P_k$ be given such that $[p_1]_H^F\neq [p_2]_H^F$. Since every class of $\mathcal C^*$ contains an element from $P$, there is a  $p\in P_k$ such that $[p]_H^F=-[p_1]_H^F$. Then $pp_1\in H_k$ and $pp_2\not\in H_k$ which implies  that $[p_1]_{H_k}^{F_k}\neq [p_2]_{H_k}^{F_k}$.

\smallskip
1.(b) Suppose that $\varphi_k$ is not injective. It suffices to prove $\psi_k$ is injective.
Let $a_1, a_2\in F_k$ such that $[a_1]_H^F\neq [a_2]_H^F$. We have to show that $[a_1]_{H_k}^{F_k}\neq [a_2]_{H_k}^{F_k}$ and to do so we distinguish three cases.

 If $[a_1]_H^F, [a_2]_H^F\in  \mathcal C_{F^{\times}}(H,F)$, then there exists an $\epsilon \in  F^{\times}$ such that $[\epsilon]_H^F=-[a_1]_H^F$. Therefore $\epsilon a_1\in H_k$ and $\epsilon a_2\not\in H_k$ which imply $[a_1]_{H_k}^{F_k}\neq [a_2]_{H_k}^{F_k}$.

If $[a_1]_H^F, [a_2]_H^F\in \mathcal C_k$, then there exists an $a \in F_k$ such that $[a]_H^F=-[a_1]_H^F$. Therefore $a a_1\in H_k$ and $a a_2\not\in H_k$ which imply $[a_1]_{H_k}^{F_k}\neq [a_2]_{H_k}^{F_k}$.

By symmetry, it remains to consider the case where $[a_1]_H^F \in \mathcal C_k$ and $ [a_2]_H^F\in  \mathcal C_{F^{\times}}(H,F)$. Then there exists $\epsilon \in  F^{\times}$ such that $[\epsilon]_H^F=-[a_2]_H^F$. Therefore $\epsilon a_2\in H_k$. Assume to the contrary that $[a_1]_{H_k}^{F_k}=[a_2]_{H_k}^{F_k}$. Then $\epsilon a_1\in H_k$. Since $\varphi_k$ is not injective, there exists  $\eta\in F^{\times}$ such that $[\eta]_H^F\neq [1]_H^F$ and $[\eta]_H^F+e_k=e_k$. Then $[\eta \epsilon a_1]=e_k$ whence $\eta\epsilon a_1\in H_k$ and $\eta \epsilon a_2\in H_k$. But $[\eta\epsilon a_2]_H^F=[\eta]_H^F\neq [1]_H^F$, a contradiction.

\smallskip
1.(c) Since $\mathcal C_{F^{\times}}(H,F)\cup \mathcal C_0=\mathcal C_0$, we have $\mathcal C(H_0,F_0)\cong \mathcal C_0$. If $\mathcal C (H_k, F_k)$ is a group, then $H_k$ is a Krull monoid by \eqref{group} and \eqref{Krull}.

\medskip
2.  Suppose that $e_k \in \mathcal C_H$. For $g\in \mathcal C_k^*$, we set $\overline{g}=g+\varphi_k( \mathcal C_{F^{\times}}(H,F))\in \mathcal C_k^*/\varphi_k( \mathcal C_{F^{\times}}(H,F))$ and we define
\[
\begin{aligned}
\theta^* \colon F_k & \ \longrightarrow \ \mathcal F(\mathcal C_k^*/\varphi_k( \mathcal C_{F^{\times}}(H,F))) \\ a=\epsilon \prod_{p\in P_k}p^{\mathsf v_p(a)} & \ \longmapsto \ \prod_{p\in P_k}\overline{[p]_H^F}^{\mathsf v_p(a)} \,.
\end{aligned}
\]
First, we show that $\theta^*(H_k)=\mathcal B(\mathcal C_k^*/\varphi_k( \mathcal C_{F^{\times}}(H,F)))$.
 Note $H_k=H\cap F_k=\big\{x\in F_k\mid [x]_H^F\in \{e_0, e_k\}\big\}$. If $a\in H_k^{\times}$, then $\theta^*(a)=1_{\mathcal F(\mathcal C_k^*/\varphi_k( \mathcal C_{F^{\times}}(H,F)))}$.
If $a=\epsilon \prod_{p\in P_k}p^{\mathsf v_p(a)}\in H_k\setminus H_k^{\times}$, then the sum of the elements of  $\theta^*(a)$ equals $\sum_{p\in P_k}\mathsf v_p(a)\overline{[p]_H^F} = \overline{[a]_H^F}=\overline{e_k}$, which is the zero element of the group $\mathcal C_k^*/\varphi_k( \mathcal C_{F^{\times}}(H,F))$. Thus we obtain that $\theta^*(H_k)\subset \mathcal B(\mathcal C_k^*/\varphi_k( \mathcal C_{F^{\times}}(H,F)))$ and in order to verify equality, we choose an $S=\overline{g_1} \cdot \ldots  \cdot \overline{g_{\ell}}\in \mathcal B(\mathcal C_k^*/\varphi_k( \mathcal C_{F^{\times}}(H,F)))$, where $g_1,\ldots g_{\ell}\in \mathcal C_k^*$. By assumption, there exist  $p_1,\ldots ,p_{\ell}\in P_k$ such that $[p_i]_H^F=g_i$ for all $i\in [1,\ell]$.
Since
\[
\overline{[p_1 \cdot \ldots \cdot  p_{\ell}]_H^F} = \overline{[p_1]_H^F} + \ldots + \overline{[p_{\ell}]_H^F} =
\overline{g_1} + \ldots  + \overline{g_{\ell}} = 0_{\mathcal C_k^*/\varphi_k( \mathcal C_{F^{\times}}(H,F))} \,,
\]
we infer that $[p_1 \cdot \ldots \cdot  p_{\ell}]_H^F\in \varphi_k( \mathcal C_{F^{\times}}(H,F))$. Thus there is an $\epsilon\in F^{\times}$ such that $\varphi_k([\epsilon]_H^F)=-[p_1 \cdot \ldots \cdot  p_{\ell}]_H^F$. So it follows that $[\epsilon p_1 \cdot \ldots \cdot p_{\ell}]_H^F = e_k$ whence $\epsilon p_1 \cdot \ldots \cdot  p_{\ell}\in H\cap F_k=H_k$ and $\theta^*(\epsilon p_1 \cdot \ldots \cdot p_{\ell})=S$.

Secondly, we show $\theta=\theta^*|_{H_k}\colon H_k\rightarrow \mathcal B(\mathcal C_k^*/\varphi_k( \mathcal C_{F^{\times}}(H,F)))$ is a transfer homomorphism. Clearly, $\mathcal B (\cdot)$ is reduced and {\bf (T1)} holds. In order to verify {\bf (T2)}, let  $a\in H_k$ and $b,c\in \mathcal B(\mathcal C_k^*/\varphi_k( \mathcal C_{F^{\times}}(H,F)))$ such that $\theta(a)=bc$.
We set $a=\epsilon p_1 \cdot \ldots \cdot p_{\ell}$, where $\epsilon\in F^{\times}$, $\ell \in \N_0$, $p_1,\ldots, p_{\ell}\in P_k$ and, after renumbering if necessary, we assume $b=\overline{[p_1]_H^F} \cdot \ldots \cdot \overline{[p_s]_H^F}$ and $c=\overline{[p_{s+1}]_H^F} \cdot \ldots \cdot \overline{[p_{\ell}]_H^F}$, where $s\in [0,\ell]$. Thus there exists an $\epsilon_1\in F^{\times}$ such that $[\epsilon_1]_H^F=-[p_1 \cdot \ldots \cdot  p_s]_H^F$ and hence  $$[\epsilon\epsilon_1^{-1}]_H^F=[\epsilon]_H^F-[\epsilon_1]_H^F=-[p_1 \cdot \ldots \cdot p_{\ell}]_H^F+[p_1 \cdot \ldots \cdot p_s]_H^F=-[p_{s+1} \cdot \ldots \cdot p_{\ell}]_H^F\,.$$
It follows that $u=\epsilon_1 p_1 \cdot \ldots \cdot p_s\in H_k$, $v=\epsilon\epsilon_1^{-1}p_{s+1} \cdot \ldots \cdot p_{\ell}\in H_k$, $a=uv$,  $\theta(u)=b$, and $\theta(v)=c$.
\end{proof}

\begin{theorem} \label{4.2}
Let $H\subset F=F^{\times}\times \mathcal F(P)$ be a  dense seminormal \C-monoid $($with all notations as above$)$ and suppose  that every class of  $\mathcal C^*$ contains an element from $P$.
For every $i\in [0,n]$, let $\varphi_i \colon  \mathcal C_{F^{\times}}(H,F)\rightarrow \mathcal C_i^*$ be defined by $\varphi_i([\epsilon])=[\epsilon]+e_i$ for all $\epsilon\in F^{\times}$ and set $\mathcal C_i'=\mathcal C_i^*/\varphi_i( \mathcal C_{F^{\times}}(H,F))$.
 For distinct $i,j\in [0,n]$, let $\phi_{i,j} \colon \mathcal C_i^*\rightarrow \mathcal C_i^*+\mathcal C_j^*$  be defined by $\phi_{i,j}(g_i)=g_i+e_j$ for all $g_i\in \mathcal C_i^*$.

\noindent
Then $H$ is half-factorial if and only if the following properties hold{\rm \,:}
\begin{enumerate}
\item[P1.]  $|\mathcal C_n'|\le 2$.

\item[P2.]  For every $i\in [0,n]$, we have
            \[
            \phi_{i,n}^{-1}(\varphi_n( \mathcal C_{F^{\times}}(H,F)))=\left\{
            \begin{aligned}
            &\varphi_i( \mathcal C_{F^{\times}}(H,F)),& \quad\quad& e_i\in \mathcal C_H\\
            &\mathcal C_i,&& e_i\not\in \mathcal C_H
            \end{aligned}
            \right.
            \]

\item[P3.] For distinct $i,j\in [0,n]$, if $e_i,e_j\in \mathcal C_H$, then
$\ker (\phi_{i,j} \circ \varphi_i)=\ker(\varphi_i)+ \ker (\varphi_j)$.

\item[P4.] $\mathsf E(\mathcal C^*)\setminus \mathcal C_H$ is additively closed and for distinct $i_1,i_2, j\in [0,n]$, if $e_{i_1}, e_{i_2}\in \mathcal C_H$ and $e_j\in \mathsf E(\mathcal C^*)\setminus \mathcal C_H$ such that $e_{i_1}+e_{i_2}+e_j\in \mathcal C_H$, then $e_{i_1}+e_j\in \mathcal C_H$ or $e_{i_2}+e_j\in \mathcal C_H$.
\end{enumerate}
\end{theorem}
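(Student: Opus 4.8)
The plan is to turn half-factoriality of $H$ into a combinatorial statement about partitions of prime divisors modulo the Clifford semigroup $\mathcal C$. The key preliminary observation is that membership in $H$ is detected by the class alone: if $[x]_H^F=[y]_H^F$ then $x^{-1}H\cap F=y^{-1}H\cap F$, and since $y\in H$ is equivalent to $1\in y^{-1}H\cap F$, one obtains $H=\{x\in F\mid [x]\in\mathcal C_H\}$, where by Theorem \ref{1.1}.1 every element of $\mathcal C_H$ is an idempotent $e_k$. Writing $x=\epsilon\prod_{p\in P}p^{\mathsf v_p(x)}\in H$, a factorization of $x$ into atoms thus corresponds to a partition of the multiset of its prime divisors into blocks, each of which --- after multiplying in a suitable unit --- has class an idempotent of $\mathcal C_H$; the atoms are the minimal such blocks, and a block supported on $P_{i_1},\dots,P_{i_r}$ is admissible precisely when $e_{i_1}+\dots+e_{i_r}\in\mathcal C_H$ and the sum of the reduced classes of its primes is trivial in the corresponding constituent group modulo $\varphi(\mathcal C_{F^{\times}}(H,F))$. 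Hence $H$ is half-factorial if and only if, for every $x$, all partitions into minimal admissible blocks have the same number of parts.

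For the necessity of the conditions I would argue by producing, whenever one of P1--P4 fails, an element of $H$ with two factorizations of different lengths. Since each $H_k$ is a divisor-closed submonoid of $H$, it inherits half-factoriality, and Proposition \ref{4.1}.2 together with the Carlitz criterion for $\mathcal B(\cdot)$ forces $|\mathcal C_k'|\le 2$ for every $k$ with $e_k\in\mathcal C_H$; applied to $k=n$ this gives P1. Observing that $\phi_{i,n}\circ\varphi_i=\varphi_n$, condition P2 (for $e_i\in\mathcal C_H$) is exactly the injectivity of the induced map $\mathcal C_i'\to\mathcal C_n'$, so that P1 and P2 jointly yield $|\mathcal C_i'|\le 2$ for all realized $i$; the case $e_i\notin\mathcal C_H$ of P2 and the conditions P3, P4 will each be shown to be forced by exhibiting, using the hypothesis that every class contains a prime, two factorizations of a single explicitly constructed element that have different lengths (for P3 a mixed atom supported on $P_i\cup P_j$ competing with pure atoms, for P4 a block absorbing a non-realized idempotent $e_j$).

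For the sufficiency I would construct an explicit length function, that is, a homomorphism $\lambda$ from $\mathsf q(H_{\red})$ to $\mathbb{Q}$ with $\lambda(u)=1$ for every atom $u$; the existence of such a $\lambda$ is equivalent to half-factoriality. Assigning to each prime $p$ the value $1$ when its reduced class in $\mathcal C_i'$ (for $p\in P_i$) is trivial and the value $\tfrac12$ when that class is the nonzero element of a two-element group $\mathcal C_i'$, one checks that pure atoms supported on a single realized piece carry $\lambda$-value $1$ by the block-monoid description underlying Proposition \ref{4.1}.2. The content of P3 and P4 is to make this assignment consistent across blocks: P3, namely $\ker(\phi_{i,j}\circ\varphi_i)=\ker\varphi_i+\ker\varphi_j$, guarantees that a mixed atom on two realized pieces splits, value-wise, into half-atoms, while P4 controls how atoms may absorb the non-realized idempotents, so that blocks meeting $P_j$ with $e_j\notin\mathcal C_H$ still total $\lambda$-value $1$. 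Verifying $\lambda(u)=1$ for every atom, including all mixed types, then yields that every factorization of $x$ has length $\lambda(x)$.

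The main obstacle is precisely this last verification in the sufficiency direction: pinning down the structure of the mixed atoms --- those whose support spans several pieces $P_i$, including non-realized ones --- and proving, from P3 and P4, that each carries total $\lambda$-value exactly $1$. The Clifford-semigroup arithmetic (the identities $e_i+e_n=e_n$ and $\mathcal C_i^*+\mathcal C_j^*\subset\mathcal C_{e_i+e_j}^*$, together with the homomorphisms $\varphi_i$ and $\phi_{i,j}$) should reduce every mixed atom to a combination of the pure half-atoms already understood, but organizing this so that no shorter admissible block is overlooked --- which would produce a second factorization of different length --- is the delicate point, and it is the reason the four conditions, rather than the single condition $|\mathcal C_n'|\le 2$, are needed.
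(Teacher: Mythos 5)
Your overall plan coincides, in both directions, with the paper's actual proof: necessity of P1 via the divisor-closed submonoid $H_n$, the transfer homomorphism of Proposition \ref{4.1}.2 and the Carlitz criterion for $\mathcal B(\mathcal C_n')$; necessity of P2--P4 by exhibiting elements with two factorization lengths; and sufficiency via an additive prime-counting function with values $1$ and $\tfrac12$ --- this is precisely the paper's function $\mathsf l=\mathsf l_1+\tfrac12\mathsf l_2$, and your claim ``every atom has $\lambda$-value $1$'' is exactly its key assertion \textbf{A4}. The problem is that you stop where the content of the theorem lies, and you say so yourself: neither the explicit constructions forcing P2--P4 (the paper needs concrete identities such as $(p_1p_2)q^2=(\epsilon^{-1}p_1q)(\epsilon p_2q)$ with $[p_1]=g_i$, $[p_2]=-g_i$, $[q]=e_n$, together with proofs that the factors are atoms) nor the verification of \textbf{A4} is carried out, so what you have is the paper's proof outline rather than a proof.

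Beyond incompleteness there is one concrete error and one missing ingredient. The error: in the sufficiency direction the weight must be $0$ on \emph{every} prime lying over a non-realized idempotent (i.e.\ over $e_i\notin\mathcal C_H$), irrespective of its reduced class; your rule as stated assigns $1$ to a prime $p$ with $[p]=e_i\in\mathsf E(\mathcal C^*)\setminus\mathcal C_H$, since $e_i$ is trivial in $\mathcal C_i'$. As P1--P4 do not forbid $e_i+e_j\in\mathcal C_H$ for a non-realized $e_i$ and a realized $e_j$ (P4 only makes the non-realized idempotents additively closed among themselves), a prime $p_1$ with $[p_1]=e_j$ then yields an atom $p_1p$ of weight $2$ under your assignment, and your length function fails. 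The missing ingredient: proving that every atom has weight $1$ is where all four conditions interact, and it is not a soft consistency check. One first needs the derived facts (the paper's \textbf{A1}--\textbf{A3}) that $|\mathcal C_i'|\le2$ for every realized $i$ and that $\phi_{i,j}^{-1}\bigl(\mathcal C_{F^{\times}}(H,F)+e_i+e_j\bigr)$ equals $\mathcal C_{F^{\times}}(H,F)+e_i$ resp.\ $\mathcal C_i$, obtained from P1 and P2 by factoring $\phi_{i,n}=\phi_{k,n}\circ\phi_{i,k}$; then weight-$0$ atoms are excluded by P4; an atom containing two weight-$1$ primes is split into two non-units of $H$ by applying P4 again (to realize $e_j+e_k$), then \textbf{A2}, and finally P3 to manufacture the correcting unit $\delta$ with $[\delta]\in\ker\varphi_i$ and $[\epsilon_1\epsilon_3\delta^{-1}]\in\ker\varphi_t$ --- your sketch never indicates how the kernel condition in P3 produces this splitting, which is the only place P3 enters; and atoms containing weight-$\tfrac12$ primes are reduced to that case by using $|\mathcal C_k'|\le2$ and the prime-in-every-class hypothesis to replace a pair of them by a single weight-$1$ prime. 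You would also need to note that $\mathsf l_2(a)$ is even for every $a\in H$ (so that elements of $H$ have integral weight), which follows from P1 and P2 by pushing all classes into $\mathcal C_n'$. Until these steps are done, the ``delicate point'' you defer \emph{is} the theorem.
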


\begin{proof}
In order to show that $e_n\in \mathcal C_H$, we choose a
$p^* \in P$ such that $[p^*]=e_n$. Since $H\subset F $ is dense, there exists an $a\in H$ such that $p^* \mid a$, say $a=p^*b$ with $b \in F$. By Theorem \ref{1.1}, $[a]$ is an idempotent and since $[a] = [p^*]+[b]=e_n+[b] \in \mathcal C_n$, it follows that  $e_n  = [a] \in \mathcal C_H$.

\smallskip
\noindent
{\bf 1.} We suppose that  $H$ is half-factorial and verify properties (P1) - (P4).

\smallskip
(P1). Let $P_n=\{p\in P\mid [p]\in \mathcal C_n^*\}$, $F_n=F^{\times}\times \mathcal F(P_n)$, and $H_n=H\cap F_n$. Since $H_n$ is a divisor closed submonoid of $H$, $H_n$ is half-factorial. By Proposition \ref{4.1}.3, there is a transfer homomorphism $\theta \colon H_n \to \mathcal B (C_n')$. Thus $\mathcal B (C_n')$ is half-factorial which implies  that $|\mathcal C_n'|\le 2$ (\cite[Corollary 3.4.12]{Ge-HK06a}).

\smallskip
(P2). Let $i\in [0,n]$ and $e_i\in \mathcal C_H$. Clearly, $ e_i+e_n=e_n$ and $\phi_{i,n}( \mathcal C_{F^{\times}}(H,F)+e_i)= \mathcal C_{F^{\times}}(H,F)+e_n$.
 Assume to the contrary that  there exists $g_i\in \mathcal C_i$ with $g_i\not\in \varphi_i( \mathcal C_{F^{\times}}(H,F))$ such that $\phi_{i,n}(g_i)\in  \mathcal C_{F^{\times}}(H,F)+e_n$, i.e. there exists $\epsilon\in  F^{\times}$ such that  $g_i+e_n=[\epsilon]+e_n$. Let $p_1, p_2, q\in P$ such that $[p_1]=g_i$, $[p_2]=-g_i$, and $[q]=e_n$. Then $[\epsilon^{-1}p_1q]=[\epsilon p_2q]=[q]=e_n$ and $[p_1p_2]=e_i$. We claim  that  $\epsilon^{-1}p_1q, \epsilon p_2q, p_1p_2, q \in \mathcal A (H)$. Clearly, the elements lie in $H$ and, for example, assume to the contrary that $p_1p_2$ is not an atom. Then there is an $\eta \in F^{\times}$ such that $\eta p_1, \eta^{-1}p_2 \in H$. Then $[\eta p_1] = [\eta] + [p_1] = e_i$ whence $g_i = [p_1] = [\eta^{-1}]+e_i \in \varphi_i \big( \mathcal C_{F^{\times}} (H,F) \big)$, a contradiction.
 Thus we obtain that
$(p_1p_2)(q)^{2}=(\epsilon ^{-1}p_1q)(\epsilon p_2q)$, a contradiction to the half-factoriality of $H$.

Suppose $e_i\in \mathsf E(\mathcal C^*)\setminus \mathcal C_H$. Assume to the contrary that there exists $g_i\in \mathcal C_i$ such that $\phi_{i,n}(g_i)=g_i+e_n\not\in  \mathcal C_{F^{\times}}(H,F)+ e_n$.
Let $p_1,p_2,p_3\in P$ such that $[p_1]=g_i$, $[p_2]=e_n$, and $[p_3]=-g_i+e_n$, and let $\ord (g_i) = \ord_{\mathcal C_i} (g_i)$ denote the order of $g_i$ in the group $\mathcal C_i$. Then  $[p_1^{\ord(g_i)}p_2]=[p_2]=[p_1p_3]=e_n\in \mathcal C_H$ which implies that $p_1^{\ord(g_i)}p_2, p_2, p_1p_3 \in \mathcal A (H)$. Therefore $(p_1p_3)^{\ord(g_i)}p_2=(p_1^{\ord(g_i)}p_2)(p_3^{\ord(g_i)})$. Since $p_3^{\ord(g_i)}\in H$ and for any $\epsilon \in F^{\times}$, $\epsilon p_3\not\in H$, we obtain $\ord(g_i)\not\in \mathsf L_H (p_3^{\ord(g_i)})$,  a contradiction to the half-factoriality of $H$.

\smallskip
(P3). Let $i,j\in [0,n]$ be distinct and $e_i,e_j\in \mathcal C_H$.  Since $ \phi_{i,j} \circ \varphi_i =\phi_{j,i} \circ \varphi_j $, we have $\ker ( \phi_{i,j} \circ \varphi_i) \supset\ker(\varphi_i)$ and $\ker (\phi_{i,j} \circ \varphi_i)\supset \ker (\varphi_j)$ which imply  $\ker (\phi_{i,j} \circ \varphi_i )\supset \ker(\varphi_i)+\ker (\varphi_j)$. Let $\epsilon\in F^{\times}$ such that $\phi_{i,j}(\varphi_i([\epsilon]))=e_i+e_j$. If $[\epsilon]+e_i=e_i$
or $[\epsilon]+e_j=e_j$, then $[\epsilon] = [\epsilon]+[1] \in \ker (\varphi_i)+\ker (\varphi_j)$. Suppose $[\epsilon]+e_i\neq e_i$ and $[\epsilon]+e_j\neq e_j$.
Let $p_1, p_2, p_3\in P$ such that $[p_1]=[\epsilon]+e_i$, $[p_2]=e_j$, and $[p_3]=e_i+e_j$.
Since $p_2, p_3, \epsilon p_3, \epsilon^{-1}p_1 \in \mathcal A (H)$, the equation $(p_1p_2)p_3=(\epsilon^{-1}p_1)(\epsilon p_3)p_2$ implies that $p_1p_2 \notin \mathcal A (H)$. Then there exists $\delta\in F^{\times}$ such that $\delta p_1 \in H$ and $\delta^{-1}p_2 \in H$ whence $[\delta p_1]= [\delta]+[\epsilon]+e_i=e_i$ and $[\delta^{-1}p_2]= -[\delta]+e_j=e_j$.
It follows that $[\delta]+[\epsilon]\in \ker(\varphi_i)$, $-[\delta]\in \ker(\varphi_j)$, and $[\epsilon]=[\delta]+[\epsilon]-[\delta]\in \ker(\varphi_i)+\ker(\varphi_j)$.

\smallskip
(P4). Assume to that contrary that there exist  $f_1,f_2 \in \mathsf E(\mathcal C^*)\setminus \mathcal C_H$ such that $f_1+f_2\in \mathcal C_H$. Let $q_1,q_2\in P$ such that $[q_1]=f_1$ and $[q_2]=f_2$. Then $[q_1q_2]=[q_1^2q_2]=[q_1q_2^2]=f_1+f_2\in \mathcal C_H$, $q_1q_2, q_1^2q_2, q_1q_2^2\in \mathcal A (H)$, and $(q_1q_2)^3= (q_1^2q_2) (q_1q_2^2)$, a contradiction to the half-factoriality of $H$.

Let $i_1,i_2, j\in [0,n]$ such that $e_j\in \mathsf E(\mathcal C^*)\setminus \mathcal C_H$ and $ e_{i_1}, e_{i_2}, e_{i_1}+e_{i_2}+e_j\in \mathcal C_H$. Assume to the contrary that $e_{i_1}+e_j, e_{i_2}+e_j\in \mathsf E(\mathcal C^*)\setminus \mathcal C_H$.
 Let $p,p_1,p_2,p_3\in P$ such that $[p]=e_{i_1}+e_{i_2},[p_1]=e_{i_1}, [p_2]=e_{i_2}, [p_3]=e_j$.
Then  $p, p_1, p_2,pp_3, p_1p_2p_3 \in \mathcal A (H)$  and  $(p)(p_1p_2p_3)=(p_1)(p_2)(pp_3)$,  a contradiction to the half-factoriality of $H$.

\medskip
{\bf  2.} We suppose that (P1) - (P4) hold and prove that  $H$ is half-factorial. We start with the following three assertions.

\begin{enumerate}
\item[{\bf A1.}\,] If $i \in [0,n]$ and $e_i\in \mathcal C_H$, then  $|\mathcal C_i'|\le 2$.

\smallskip

\item[{\bf A2.}\,] If $i,j \in [0,n]$ and $e_i,e_j\in \mathcal C_H$, then  $\phi_{i,j}^{-1}( \mathcal C_{F^{\times}}(H,F)+e_i+e_j)= \mathcal C_{F^{\times}}(H,F)+e_i$.

\smallskip

\item[{\bf A3.}\,] If $i,j \in [0,n]$, $e_i\not\in \mathcal C_H$, and $e_j\in \mathcal C_H$ such that $e_i+e_j\in \mathcal C_H$, then $\phi_{i,j}^{-1}( \mathcal C_{F^{\times}}(H,F)+e_i+e_j)=\mathcal C_i$.
\end{enumerate}

\smallskip

{\it Proof of \,{\bf A1}}.\,
Let $i \in [0,n]$ and  $e_i\in \mathcal C_H$. By (P2), there is a monomorphism $\phi_{i,n}' \colon \mathcal C_i'\rightarrow \mathcal C_n'$ defined by $\phi_{i,n}'(h+\varphi_i( \mathcal C_{F^{\times}}(H,F)))=h+e_n+\varphi_n( \mathcal C_{F^{\times}}(H,F))$ for all $h\in \mathcal C_i^*$. It follows by (P1) that $|\mathcal C_i'|\le 2$.

{\it Proof of \,{\bf A2}}.\, Let $i,j \in [0,n]$ and  $e_i,e_j\in \mathcal C_H$, say  $e_i+e_j=e_k$ with $k \in [0,n]$, and note that $\phi_{i,n}=\phi_{k,n} \circ \phi_{i,k}$ and $\phi_{i,j}=\phi_{i,k}$.
Then (P2) implies that $ \mathcal C_{F^{\times}}(H,F)+e_i=\phi_{i,n}^{-1}( \mathcal C_{F^{\times}}(H,F)+e_n)=\phi_{i,k}^{-1}(\phi_{k,n}^{-1}( \mathcal C_{F^{\times}}(H,F)+e_n))=\phi_{i,j}^{-1}( \mathcal C_{F^{\times}}(H,F)+e_k)$.

{\it Proof of \,{\bf A3}}.\, Let $i,j \in [0,n]$, $e_i\in \mathsf E(\mathcal C^*)\setminus \mathcal C_H$,  and $e_j\in \mathcal C_H$ such that $e_i+e_j \in \mathcal C_H$, say $e_i+e_j=e_k$ with $k \in [0,n]$, and again we note  that $\phi_{i,n}=\phi_{k,n} \circ \phi_{i,k}$ and $\phi_{i,j}=\phi_{i,k}$.
Then (P2) implies that $\mathcal C_i=\phi_{i,n}^{-1}( \mathcal C_{F^{\times}}(H,F)+e_n)=\phi_{i,k}^{-1}(\phi_{k,n}^{-1}( \mathcal C_{F^{\times}}(H,F)+e_n))=\phi_{i,j}^{-1}( \mathcal C_{F^{\times}}(H,F)+e_k)$.

\smallskip
Now we set $I=\{i\in [0,n]\mid e_i\in \mathcal C_H\}$ and we define
\[
G^1=\bigcup_{i\in I} \varphi_i \big( \mathcal C_{F^{\times}}(H,F) \big), \ G^2=\bigcup_{i\in I} \big(\mathcal C_i^*\setminus \varphi_i( \mathcal C_{F^{\times}}(H,F)) \big), \ \text{ and } \ G^3=\mathcal C^*\setminus \bigcup_{i\in I}\mathcal C_i^* \,.
\]
For every $a=\epsilon p_1 \cdot \ldots \cdot p_{\ell}\in F$, where  $\epsilon \in F^{\times}$, $\ell \in \N_0$, and  $p_1,\ldots, p_{\ell}\in P$, we define
\[
\mathsf l_1(a) = |\{j\in[1,\ell]\mid [p_j]\in G^1\}|\,,\
\mathsf l_2(a) = |\{k\in [1,\ell]\mid [p_k]\in G^2\}|\,, \quad
\text{and } \quad  \mathsf l(a)=\mathsf l_1(a)+\frac{1}{2}\mathsf l_2(a)\,.
\]
In order to prove that $H$ is half-factorial, it is sufficient to prove the following assertion.

\medskip
\noindent{\bf  A4. }{\it If $a$ is an atom of $H$, then $\mathsf l(a)=1$.}
\medskip

{\it Proof of \,{\bf A4}}.\,
Let $a=\epsilon p_1 \cdot \ldots \cdot p_{\ell}\in F\setminus F^{\times}$ be an atom of $H$. Assume to the contrary that $\mathsf l(a)=0$. Then,  for every $i\in [1,\ell]$, we have  $[p_i]\in G^3$, say $[p_i] \in \mathcal C_{f_i}$ with  $f_i\in \mathsf E(\mathcal C^*)\setminus \mathcal C_H$.
Clearly, we have
$[a]=[\epsilon]+[p_1]+\ldots+[p_{\ell}] \in \mathcal C_{[a]}$ and $[a]\in \mathcal C_{F^{\times}}(H, F) + \mathcal C_{f_1} + \ldots + \mathcal C_{f_{\ell}} \subset \mathcal C_{f_1+\ldots +f_{\ell}}$ whence $f_1+\ldots +f_{\ell}=[a]$, a contradiction to (P4).

Assume to the contrary that  $\mathsf l(a)\ge 2$ and distinguish three cases.

\smallskip
\noindent
CASE 1: \ $\mathsf l_1(a)\ge 2$.

After renumbering if necessary, we may assume that $[p_1],[p_2]\in G^1$. Then there exists $\epsilon_1,\epsilon_2\in F^{\times}$ and $e_i,e_j\in \mathcal C_H$ such that $[p_1]=[\epsilon_1]+e_i, [p_2]=[\epsilon_2]+e_j$. We set  $g=[\epsilon p_3 \cdot \ldots \cdot p_{\ell}]\in\mathcal C_k$ and $[a]=e_{r}\in \mathcal C_H$ for some $k,r\in [0,n]$.
Then
\[
e_r = [\epsilon_1] + e_i + [\epsilon_2]+e_j + g \in \mathcal C_{F^{\times}} (H,F)+ \mathcal C_i + \mathcal C_j + \mathcal C_k \subset \mathcal C_{e_i+e_j+e_k}
\]
whence
$e_i+e_j+e_k=e_{r}\in \mathcal C_H$. By (P4), we obtain that $e_i+e_k\in \mathcal C_H$ or $e_j+e_k\in \mathcal C_H$, say $e_j+e_k=e_t\in \mathcal C_H$ for some $t\in [0,n]$.

Since   $[\epsilon_1]+[p_2]+g\in \mathcal C_t$ and
\[
\phi_{t,i}([\epsilon_1]+[p_2]+g)= [\epsilon_1] + [p_2]+g+e_i = [p_1]+[p_2]+g= [\epsilon p_1 \cdot \ldots \cdot p_{\ell}]=e_r = e_i+e_t \in  \mathcal C_{F^{\times}}(H,F)+e_i+e_t,
\]
we obtain $[\epsilon_1]+[p_2]+g\in  \mathcal C_{F^{\times}}(H,F)+e_t$ by  {\bf A2}. It follows that  $[\epsilon p_2 \cdot \ldots \cdot p_{\ell}]=[p_2]+g\in  \mathcal C_{F^{\times}}(H,F)+e_t$ whence there exists $\epsilon_3\in F^{\times}$ such that $[\epsilon p_2 \cdot \ldots \cdot p_{\ell}]=[\epsilon_3]+e_t$.
Since
\[
e_i+e_t=e_r = [a]=[p_1]+[\epsilon p_2 \cdot \ldots \cdot p_{\ell}]= [\epsilon_1]+e_i+[\epsilon_3]+e_t \,,
\]
we infer that  $[\epsilon_1]+[\epsilon_3]\in \ker(\phi_{i,t} \circ \varphi_i )$. By (P3), there exists $\delta\in F^{\times}$ such that $[\delta]\in \ker(\varphi_i)$ and $[\epsilon_1\epsilon_3\delta^{-1}]\in \ker(\varphi_t)$.
Thus we obtain that $$[\delta \epsilon_1^{-1}p_1]=[\delta]-[\epsilon_1]+[p_1]=[\delta]+e_i=e_i$$
 $$[\epsilon_1\delta^{-1}\epsilon p_2 \cdot \ldots \cdot p_{\ell}]=[\epsilon_1\epsilon_3\delta^{-1} \epsilon_3^{-1} \epsilon p_2 \cdot \ldots \cdot p_{\ell}]=[\epsilon_1\epsilon_3\delta^{-1}]-[\epsilon_3]+[\epsilon p_2 \cdot \ldots \cdot p_{\ell}]=[\epsilon_1\epsilon_2\delta^{-1}]+e_t=e_t\,.$$
Thus $\delta \epsilon_1^{-1}p_1 \in H \setminus H^{\times}$, $\epsilon_1\delta^{-1}\epsilon p_2 \cdot \ldots  \cdot p_{\ell} \in H \setminus H^{\times}$, and   $a=(\delta \epsilon_1^{-1}p_1)(\epsilon_1\delta^{-1}\epsilon p_2 \cdot \ldots  \cdot p_{\ell})$, a contradiction to $a \in \mathcal A (H)$.

\smallskip
\noindent
CASE 2: \ $\mathsf l_1(a)=1$ and $\mathsf l_2(a)\ge 2$.

After renumbering if necessary, we may assume that $[p_1]\in G^1$, and $[p_2], [p_3]\in G^2$. There are $i,j, k\in [0,n]$ such that $[p_2]\in \mathcal C_i^* \setminus \varphi_i (\mathcal C_{F^{\times}} (H,F)$,  $[p_3]\in \mathcal C_j^* \setminus \varphi_j (\mathcal C_{F^{\times}} (H,F)$, and  $e_k=e_i+e_j$.  By {\bf A2} we infer that
\[
\phi_{i,k}([p_2])=[p_2]+e_k\not\in  \mathcal C_{F^{\times}}(H,F)+e_i+e_k=\varphi_k( \mathcal C_{F^{\times}}(H,F))
\]
 and
\[
 \phi_{i,k}([p_3])=[p_3]+e_k\not\in  \mathcal C_{F^{\times}}(H,F)+e_i+e_k=\varphi_k( \mathcal C_{F^{\times}}(H,F)) \,.
\]

Since $[p_2]+e_k$ and $[p_3]+e_k$ are in $\mathcal C_k^* \setminus \varphi_k (\mathcal C_{F^{\times}} (H,F))$ and since, by {\bf A1},
$|\mathcal C_k'| \le 2$, it follows that $[p_2]+[p_3]= ([p_2]+e_k)+([p_3]+e_k) \in \varphi_k (\mathcal C_{F^{\times}} (H,F)) \subset G^1$.
We choose a   $q\in P$ such that $[q]=[p_2]+[p_3]$. Then $b=\epsilon p_1qp_4 \cdot \ldots \cdot p_{\ell} \in \mathcal A (H)$ with  $\mathsf l(b)=\mathsf l(a)\ge 2$. Now we are back to CASE 1.

\smallskip
\noindent
CASE 3: \ $\mathsf l_1(a)=0$ and $\mathsf l_2(a)\ge 4$.

After renumbering if necessary, we may assume that $[p_1], [p_2], [p_3], [p_4]\in G^2$.
Arguing  as in CASE 2 we infer that  $[p_1]+[p_2], [p_3]+[p_4]\in G^1$. Let $q_1,q_2\in P$ such that $[q_1]=[p_1]+[p_2]$ and $[q_2]=[p_3]+[p_4]$. Then $b=\epsilon q_1q_2p_4 \cdot \ldots \cdot p_{\ell} \in \mathcal A (H)$ with $\mathsf l(b)=\mathsf l(a)\ge 2$ whence we are back to CASE 1.
\end{proof}

\smallskip
The following example shows that both, the number and the size, of the constituent groups of the reduced class semigroup of a half-factorial seminormal \C-monoid can be arbitrarily large.

\medskip
\begin{example} \label{4.3}
Let $C$ be a Clifford semigroup with
\[
\mathsf E(C)=\{e_0,e_1,\ldots, e_n\}, \ C=\biguplus_{i=0}^n C_i , \quad \text{ and } \quad C_{e_i} = C_i = G_i\oplus \Z/2 \Z \,,
\]
where $n \in \N$, $i,j  \in [0,n]$, $C_i+e_j=C_j$ whenever $i<j$,  and  $G_0\supsetneq \ldots \supsetneq G_n=\{1\}$ are abelian groups. Let
\[
F = F^{\times} \times \mathcal F (\mathcal C) \quad \text{and} \quad
B=\{\epsilon S\in F \mid \iota(\epsilon)+\sigma(S)\in \mathsf E(C)\} \,,
\]
where $F^{\times} = G_0$,  $\iota\colon F^{\times}\rightarrow C_0$ is a monomorphism, and $\sigma \colon \mathcal F ( \mathcal C) \to \mathcal C$ is the sum function, which is  defined as $\sigma (g_1 \cdot \ldots \cdot g_{\ell}) = g_1 + \ldots + g_{\ell}$. Then
$B\subset F$ is a half-factorial dense seminormal  \C-monoid such that every class of $\mathcal C(B,F)$ contains an element from $C$,
\[
\mathcal C^*(B,F)=\mathcal C(B,F) \cong C, \  \ \mathcal C_{F^{\times}}(B,F)\cong F^{\times},  \quad \text{and} \quad  \mathcal C_B(B,F)=\mathsf E \big( \mathcal C^* (B,F) \big) \,.
\]
\end{example}

\begin{proof}

We start with the following four assertions. Let $\epsilon, \epsilon_1, \epsilon_2 \in F^{\times}$, $S, S_1, S_2 \in \mathcal F (\mathcal C)$, and consider the epimorphism
\[
\lambda\colon F^{\times}\times \mathcal F(C)\rightarrow C \text{ defined by } \lambda(\epsilon S)=\iota(\epsilon)+\sigma(S) \text{ for all }\epsilon S\in F^{\times}\times \mathcal F(C)\,.
\]
\begin{enumerate}
\item[{\bf A1.}\,] $\epsilon_1S_1\sim \epsilon_2 S_2$ if and only if $\lambda(\epsilon_1 S_1)=\lambda(\epsilon_2 S_2)$. In particular, $[\epsilon S] =  [\lambda (\epsilon S)]$.

\smallskip

\item[{\bf A2.}\,] The map $\psi \colon \mathcal C^* (B,F) \to \mathcal C$, defined by $[\epsilon S] \mapsto \lambda (\epsilon S)$, is a semigroup isomorphism.

\smallskip

\item[{\bf A3.}\,]  $\mathcal C_{F^{\times}}(B,F)\cong F^{\times}$ and $\mathcal C^*(B,F)=\mathcal C(B,F)$.

\smallskip

\item[{\bf A4.}\,] $\mathcal C_B(B,F)=\mathsf E \big( \mathcal C^* (B,F) \big)$.
\end{enumerate}

Suppose these four statements hold true. Since $B^{\times} = \{1\} = F^{\times} \cap B$, {\bf A2} implies that $B \subset F$ is a C-monoid. Clearly, $B \subset F$ is dense and by Theorem \ref{1.1} $B$ is seminormal.
Since every class of $\mathcal C^* (B,F)$ contains a prime from $C$,  all assumptions of Theorem \ref{4.2} are satisfied and we obtain that $H$ is half-factorial.
\smallskip

{\it Proof of \,{\bf A1}}.\,  By definition of $B$,  $\lambda(\epsilon_1 S_1)=\lambda(\epsilon_2 S_2)$ implies that $\epsilon_1S_1\sim \epsilon_2 S_2$. Conversely, suppose $\epsilon_1S_1\sim \epsilon_2 S_2$ and $\lambda(\epsilon_1 S_1)\neq\lambda(\epsilon_2 S_2)$.
 If there exists an $i\in [0,n]$ such that $\lambda(\epsilon_1 S_1),\lambda(\epsilon_2 S_2)\in C_i$, then $a=-\lambda(\epsilon_1 S_1)\in \mathcal C \subset F$ and hence $\epsilon_1 S_1 a\in B$, $\epsilon_2 S_2 a\not\in B$, a contradiction. After renumbering if necessary we may assume that $\lambda(\epsilon_1 S_1)\in C_i,\lambda(\epsilon_2 S_2)\in C_j$ for some $i,j\in [0,n]$ with $i<j$. Let $a=-\lambda(\epsilon_1 S_1)\in \mathcal C \subset F$. Since $G_i\supsetneq G_j$ and $C_i+e_j=C_j$, there exists $g_i\in C_i\setminus \{e_i\}$ such that $g_i+e_j=e_j$.
 Then  $\epsilon_1 S_1 a\in B$ which implies that $\epsilon_2 S_2 a\in B$. Thus $\epsilon_2 S_2 a g_i\in B$ which implies that $\epsilon_1 S_1 a g_i\in B$. But $\lambda(\epsilon_1 S_1 a g_i)=g_i\neq e_i$, a contradiction.

To verify the in particular statement, note that $\lambda (\epsilon S) = \iota (\epsilon) + \sigma (S) \in \mathcal C \subset  F$ and for an element $g \in \mathcal C$ we have $\lambda (g) = g$. Thus the claim follows immediately from the main statement.

{\it Proof of \,{\bf A2}}.\, By {\bf A1}, $\psi$ is a well-defined monomorphism and obviously $\psi$ is surjective.

{\it Proof of \,{\bf A3}}.\, Since $\iota\colon F^{\times}\rightarrow C_0$ is a monomorphism, we have, using \eqref{reduced},  that $F^{\times} = F^{\times}/B^{\times}\cong \mathcal C_{F^{\times}}(B,F)\subset C_0$ which implies that $\mathcal C(B,F)=\mathcal C^*(B,F)$.

{\it Proof of \,{\bf A4}}.\, Theorem \ref{1.1} implies that $\mathcal C_B(B,F) \subset \mathsf E \big( \mathcal C^* (B,F) \big)$. Conversely, let $\epsilon S \in F $ such that $[\epsilon S] \in \mathsf E \big( \mathcal C^* (B,F) \big)$. Then {\bf A2} implies that $\iota (\epsilon) + \sigma (S) \in \mathsf E ( \mathcal C)$. Thus, by the definition of $B$, it follows that $\epsilon S \in B$ whence $[\epsilon S] \in \mathcal C_B (B,F)$.
\end{proof}

\providecommand{\bysame}{\leavevmode\hbox to3em{\hrulefill}\thinspace}
\providecommand{\MR}{\relax\ifhmode\unskip\space\fi MR }
\providecommand{\MRhref}[2]{%
  \href{http://www.ams.org/mathscinet-getitem?mr=#1}{#2}
}
\providecommand{\href}[2]{#2}

\end{document}